\newfont{\footsc}{cmcsc10 at 8truept}
\newfont{\footbf}{cmbx10 at 8truept}
\newfont{\footrm}{cmr10 at 10truept}
\newtheorem{theorem}{Theorem}
\newtheorem{proposition}[theorem]{Proposition}
\newenvironment{proof}[1][Proof]{\noindent{\textbf {#1}  }}  {\hfill$\Box$\bigskip}
\def\blfootnote{\xdef\@thefnmark{}\@footnotetext}
\begin{document}

\title{On the second largest eigenvalue of the signless Laplacian}
\author{Leonardo Silva de Lima\thanks{Department of Production Engineering, Federal
Center of Technological Education, Rio de Janeiro, Brazil;\textit{email:
llima@cefet-rj.br, llima@memphis.edu}} \thanks{Research supported by CNPq
Grant 201888/2010--6.} \ and$\;$Vladimir Nikiforov\thanks{Department of
Mathematical Sciences, University of Memphis, Memphis TN 38152, USA;
\textit{email: vnikifrv@memphis.edu}} \thanks{Research supported by NSF Grant
DMS-0906634.} }
\date{}
\maketitle

\begin{abstract}
Let $G$ be a graph of order $n,$ and let $q_{1}\left(  G\right)  \geq
\cdots\geq q_{n}\left(  G\right)  $ be the eigenvalues of the $Q$-matrix of
$G$, also known as the signless Laplacian of $G.$ We give a necessary and
sufficient condition for the equality $q_{k}\left(  G\right)  =n-2,$ where
$1<k\leq n.$ In particular, this result solves an open problem raised by Wang,
Belardo, Huang and Borovicanin.

We also show that
\[
q_{2}\left(  G\right)  \geq\delta\left(  G\right)
\]
and determine all graphs for which equality holds. \medskip

\textbf{Keywords: }\emph{signless Laplacian; second largest eigenvalue;
eigenvalue bounds.}

\textbf{AMS classification: }\emph{05C50, 05C35}

\end{abstract}

\section{Introduction and main results}

Given a graph $G,$ write $A$ for the adjacency matrix of $G$ and let $D$ be
the diagonal matrix of the row-sums of $A,$ i.e., the degrees of $G.$ The
matrix $Q\left(  G\right)  =A+D,$ called the \emph{signless Laplacian} or the
$Q$-matrix of $G,$ has been intensively studied recently; see, e.g.,
Cvetkovi\'{c} \cite{C10} for a comprehensive survey.

As usual, we shall index the eigenvalues of $Q\left(  G\right)  $ in
non-increasing order and denote them as $q_{1}\left(  G\right)  ,q_{2}\left(
G\right)  ,\ldots,q_{n}\left(  G\right)  .$ Also, we shall write $\overline
{G}$ for the complement of $G.$

This paper is about the second largest $Q$-eigenvalue of a graph. Some notable
contributions to this area have been made by Yan \cite{Ya02}, Cvetkovi\'{c},
Rowlinson and Simi\'{c} \cite{CRS07}, Wang \emph{et al.} \cite{WBHB10}, Das
\cite{Das10}, \cite{Das11}, and Aouchiche, Hansen and Lucas \cite{AHL11}.

First, in \cite{Ya02}, Yan has proved that if $G$ is a graph of order
$n\geq2,$ then $q_{2}\left(  G\right)  \leq n-2.$ It is easy to see that
equality holds when $G$ is the complete graph, but there are many other graphs
with this property, see, e.g., \cite{AHL11} and \cite{WBHB10} for particular
examples. Rather naturally, the authors of \cite{WBHB10} raised the problem to
characterize all graphs $G$ of order $n\geq2$ such that
\begin{equation}
q_{2}\left(  G\right)  =n-2. \label{mi}%
\end{equation}
The first result of this paper gives a complete solution to this problem, but
in fact our methods allow to answer a more general question. To state these
results, we need the following definition.\medskip

\textbf{Definition }\emph{A connected bipartite graph is called
\textbf{balanced} if the sizes of its vertex classes are equal, and
\textbf{unbalanced} otherwise. An isolated vertex is considered to be an
unbalanced bipartite graph with an empty vertex class.\medskip}

\begin{theorem}
\label{th1} If $G$ is a graph of order $n\geq2$, then $q_{2}\left(  G\right)
=n-2$ if and only if $\overline{G}$ has a balanced bipartite component or at
least two bipartite components.
\end{theorem}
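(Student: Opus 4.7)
The plan is to use the identity
\[
Q(G) + Q(\overline{G}) = J + (n-2) I,
\]
where $J$ is the $n\times n$ all-ones matrix, to convert the spectral hypothesis $q_2(G) = n-2$ into a condition on $\operatorname{null}(Q(\overline{G}))$. Since $J$ annihilates every vector orthogonal to the all-ones vector $\mathbf{1}$, for each $x \perp \mathbf{1}$ one has
\[
x^T Q(G) x \;=\; (n-2)\|x\|^2 \,-\, x^T Q(\overline{G}) x.
\]
The positive semidefiniteness of $Q(\overline{G})$, visible from $x^T Q(\overline{G}) x = \sum_{uv \in E(\overline{G})} (x_u + x_v)^2$, bounds the Rayleigh quotient of $Q(G)$ on $\mathbf{1}^\perp$ by $n-2$, with equality iff $Q(\overline{G}) x = 0$. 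Combining this with Courant--Fischer (using forbidden subspace $\operatorname{span}(\mathbf{1})$) yields $q_2(G) \leq n-2$, reproving Yan's bound.

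The reduction I want is the equivalence
\[
q_2(G) = n-2 \iff \operatorname{null}(Q(\overline{G})) \cap \mathbf{1}^\perp \neq \{0\}.
\]
For the forward direction, $q_2(G) = n-2$ forces the Courant--Fischer bound $q_2(G) \le \max_{0 \ne x \perp \mathbf{1}} x^T Q(G) x / \|x\|^2 \le n-2$ to be tight; a maximizer then satisfies $x^T Q(\overline{G}) x = 0$, hence $Q(\overline{G}) x = 0$ by semidefiniteness. For the backward direction, any such nonzero $x$ gives $Q(G) x = (n-2)\,x$, so $n-2$ is an eigenvalue of $Q(G)$. If $q_1(G) > n-2$ then $n-2$ appears at some position $\ge 2$ in the spectrum, whence $q_2(G) \geq n-2$; if $q_1(G) = n-2$ with multiplicity at least two, $q_2(G) = n-2$ directly. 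The remaining case---$q_1(G) = n-2$ with a simple eigenspace---is ruled out by Perron--Frobenius: the non-negative symmetric matrix $Q(G)$ has a non-negative top eigenvector $y_1$ with $\mathbf{1}^T y_1 > 0$, and a simple eigenspace would force $x$ to be a scalar multiple of $y_1$, contradicting $\mathbf{1}^T x = 0$. This Perron--Frobenius step is the main subtlety of the argument.

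Finally, I would translate the nullspace condition combinatorially. From $x^T Q(H) x = \sum_{uv \in E(H)} (x_u + x_v)^2$ one sees that $\operatorname{null}(Q(H))$ of any graph $H$ is spanned by the signed indicator vectors $w_C = \mathbf{1}_{A(C)} - \mathbf{1}_{B(C)}$ of the bipartite connected components $C$ of $H$, with bipartition $(A(C),B(C))$ (an isolated vertex $v$ contributing $e_v$ under the paper's convention). A nontrivial combination $\sum_C c_C w_C$ lies in $\mathbf{1}^\perp$ iff $\sum_C c_C(|A(C)| - |B(C)|) = 0$, and this single linear equation admits a nontrivial solution iff either some coefficient $|A(C)| - |B(C)|$ vanishes---i.e., some bipartite component is balanced---or there are at least two unknowns---i.e., $\overline{G}$ has at least two bipartite components. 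This is exactly the stated condition on $\overline{G}$.
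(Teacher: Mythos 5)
Your proof is correct, and it takes a route that is genuinely different in its key mechanisms, even though both arguments ultimately rest on the decomposition $Q(G)+Q(\overline{G})=Q(K_n)=J+(n-2)I$ and on the quadratic form $\left\langle Q(\overline{G})\mathbf{x},\mathbf{x}\right\rangle=\sum_{uv\in E(\overline{G})}(x_u+x_v)^2$. Where the paper proves necessity via Weyl's inequality together with So's characterization of its equality case (producing a common eigenvector of $Q(G)$, $Q(\overline{G})$ and $Q(K_n)$), you replace that cited machinery by an elementary Courant--Fischer argument on $\mathbf{j}_n^{\perp}$ plus the fact that a maximizer of the Rayleigh quotient with $\mathbf{x}^TQ(\overline{G})\mathbf{x}=0$ lies in the nullspace of the positive semidefinite $Q(\overline{G})$; and where the paper proves sufficiency by two separate explicit eigenvector constructions (one for a balanced component, one for two unbalanced components) followed by Perron--Frobenius for connected $G$ and a special treatment of the disconnected case $G=2K_{n/2}$, you reduce everything to the single clean equivalence $q_2(G)=n-2\iff\operatorname{null}(Q(\overline{G}))\cap\mathbf{j}_n^{\perp}\neq\{0\}$, handle the top-eigenvalue issue with Perron--Frobenius for general (not necessarily irreducible) nonnegative symmetric matrices so no connectivity case split is needed, and then translate the nullspace condition into the solvability of one homogeneous linear equation in the coefficients of the signed component indicators, which uniformly yields ``some balanced component or at least two bipartite components.'' Your formulation buys more than the paper's Theorem \ref{th1}: since $\dim\left(\operatorname{null}(Q(\overline{G}))\cap\mathbf{j}_n^{\perp}\right)$ is exactly what controls the multiplicity of $n-2$, your argument extends almost verbatim to Theorem \ref{th2}, for which the paper needs the additional Proposition \ref{pro1} strengthening So's equality case; what the paper's route buys in return is a self-contained illustration of the Weyl--So equality technique, which is of independent interest. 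Two minor points worth making explicit in a write-up: the maximum of the Rayleigh quotient on the unit sphere of $\mathbf{j}_n^{\perp}$ is attained by compactness, and the signed indicator vectors span the nullspace because the non-bipartite components force the corresponding entries to vanish (an odd closed walk gives $x_u=-x_u$); both are easy but should be stated.
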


Note that if $G$ is a graph of order $n$ and $1<k\leq n,$ then $q_{k}\left(
G\right)  \leq n-2$, and since equality is attained for the complete graph it
is natural to ask a more general question: \medskip

\emph{Given }$k\geq2,$\emph{ for which graphs }$G$\emph{ of order }$n$\emph{
it is true that }$q_{k}\left(  G\right)  =n-2$ $?$ \medskip

In other words, how the structure of a graph of order $n$ relates to the
multiplicity of the $Q$-eigenvalue $n-2$ ? Our approach allows to specify this
relation precisely.

\begin{theorem}
\label{th2} Let $1\leq k<n$ and let $G$ be a graph of order $n.$ Then
$q_{k+1}\left(  G\right)  =n-2$ if and only if $\overline{G}$ has either $k$
balanced bipartite components or $k+1$ bipartite components.
\end{theorem}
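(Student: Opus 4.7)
The plan is to exploit the identity
\[
Q(G) + Q(\overline{G}) = J + (n-2) I,
\]
which follows from $A(G) + A(\overline{G}) = J - I$ and $D(G) + D(\overline{G}) = (n-1) I$. For any $v \in \mathbf{1}^\perp$ we have $J v = 0$, so the identity shows that $Q(G) v = (n-2) v$ is equivalent to $Q(\overline{G}) v = 0$. Hence $\{v \in \mathbf{1}^\perp : Q(G) v = (n-2) v\} = \ker Q(\overline{G}) \cap \mathbf{1}^\perp$. Since $\ker Q(\overline{G})$ is spanned by the signed bipartition indicators $v_1,\ldots,v_b$ of the bipartite components $H_1,\ldots,H_b$ of $\overline{G}$, with $\mathbf{1}^T v_j = |X_j| - |Y_j|$, intersecting with $\mathbf{1}^\perp$ yields a subspace of dimension $b_b$ when every $H_j$ is balanced and $b - 1 = b_b + b_u - 1$ otherwise; in both cases this equals $M^* := \max(b_b, b-1)$. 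The condition $M^* \ge k$ is exactly the theorem's ``$b_b \ge k$ or $b \ge k+1$,'' so the task reduces to proving $q_{k+1}(G) = n-2$ iff $M^* \ge k$.

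For the ``if'' direction I would take $k$ linearly independent vectors $w_1,\ldots,w_k \in \ker Q(\overline{G}) \cap \mathbf{1}^\perp$ (which are $(n-2)$-eigenvectors of $Q(G)$ by the bijection) and the Perron eigenvector $u_1$ of $Q(G)$. Since $u_1$ is non-negative and nonzero, $\mathbf{1}^T u_1 > 0$, so $u_1$ is linearly independent from the $w_i$, and $V := \mathrm{span}(u_1, w_1, \ldots, w_k)$ is $(k+1)$-dimensional. Using $q_1 \ge n-2$ (since $n-2$ is already an eigenvalue), a short Rayleigh-quotient computation on $V$ shows $R_{Q(G)}(v) \ge n - 2$ for every $v \in V$: when $q_1 > n - 2$ the orthogonality $u_1 \perp w_i$ of eigenvectors with distinct eigenvalues gives $R_{Q(G)}(c u_1 + w) = (c^2 q_1 + (n-2)\|w\|^2)/(c^2 + \|w\|^2) \ge n-2$, while when $q_1 = n - 2$ the entire $V$ lies in the $(n-2)$-eigenspace. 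Courant--Fischer then yields $q_{k+1}(G) \ge n-2$, and combined with Yan's bound $q_{k+1} \le n-2$ (valid since $k+1 \ge 2$) we obtain equality.

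For the converse I would apply Cauchy interlacing to the projected symmetric operator $\hat Q := P Q(G) P$ acting on $\mathbf{1}^\perp$, where $P = I - \tfrac{1}{n} \mathbf{1}\mathbf{1}^T$. On $\mathbf{1}^\perp$ the identity becomes $v^T Q(G) v = (n-2) v^T v - v^T Q(\overline{G}) v$, and because $Q(\overline{G}) \succeq 0$ every eigenvalue $\tilde q_i$ of $\hat Q$ satisfies $\tilde q_i \le n-2$. The codimension-one Cauchy interlacing $q_i(Q(G)) \ge \tilde q_i \ge q_{i+1}(Q(G))$ forces, under the assumption $q_{k+1}(G) = n - 2$, the equalities $\tilde q_1 = \cdots = \tilde q_k = n-2$. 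The corresponding $k$-dimensional eigenspace $W \subseteq \mathbf{1}^\perp$ of $\hat Q$ satisfies $v^T Q(\overline{G}) v = 0$ for every $v \in W$, which (by positive semidefiniteness of $Q(\overline{G})$) upgrades to $Q(\overline{G}) v = 0$. Thus $W \subseteq \ker Q(\overline{G}) \cap \mathbf{1}^\perp$ and $M^* \ge k$.

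The hard part will be the converse direction, because Cauchy interlacing only yields Rayleigh-quotient information on the projected operator $\hat Q$, and one must upgrade this to an honest $k$-dimensional subspace of $\ker Q(\overline{G})$. The essential tool is the standard fact that $v^T M v = 0$ forces $M v = 0$ when $M$ is positive semidefinite, and this is exactly what converts the quadratic-form condition supplied by Cauchy interlacing into the kernel vectors required for the dimension count.
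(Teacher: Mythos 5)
Your proposal is correct, but it takes a genuinely different route from the paper. Both arguments ultimately rest on the identity $Q(G)+Q(\overline{G})=Q(K_{n})=J+(n-2)I$, but the paper exploits it through Weyl's inequalities together with So's characterization of their equality case, which it first has to extend (Proposition \ref{pro1}, proved by induction on invariant subspaces) in order to extract $k$ orthogonal vectors that are simultaneously eigenvectors of $Q(G)$, $Q(\overline{G})$ and $Q(K_{n})$; the combinatorial conclusion in the necessity part is then reached by a rank argument on the $k\times n$ matrix formed by these eigenvectors, and sufficiency is proved by two separate explicit constructions (one for $k$ balanced components, one for $k+1$ components). You bypass So's theorem and Proposition \ref{pro1} entirely: restricting the identity to $\mathbf{j}_{n}^{\perp}$ identifies the $(n-2)$-eigenvectors of $Q(G)$ lying in $\mathbf{j}_{n}^{\perp}$ with $\ker Q(\overline{G})\cap\mathbf{j}_{n}^{\perp}$, whose dimension you compute to be $\max(b_{b},b-1)$ (with $b$ the number of bipartite components of $\overline{G}$ and $b_{b}$ the number of balanced ones), so both alternatives in the statement collapse into the single condition $\max(b_{b},b-1)\geq k$; sufficiency then follows from Courant--Fischer applied to the span of these kernel vectors together with a nonnegative $q_{1}$-eigenvector, and necessity from Cauchy interlacing for the compression of $Q(G)$ to $\mathbf{j}_{n}^{\perp}$ plus positive semidefiniteness of $Q(\overline{G})$, which upgrades $\left\langle Q(\overline{G})\mathbf{v},\mathbf{v}\right\rangle =0$ to $Q(\overline{G})\mathbf{v}=0$. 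Your steps all check out; the only points you should spell out are that $\ker Q(\overline{G})$ is indeed spanned by the signed characteristic vectors of the bipartite components (immediate from Theorem \ref{th4}, formula (\ref{Qf}) and semidefiniteness) and that a nonnegative $q_{1}$-eigenvector can be chosen even when $G$ is disconnected, so that it is independent of your vectors in $\mathbf{j}_{n}^{\perp}$. What the paper's heavier machinery buys is Proposition \ref{pro1} itself, a reusable strengthening of the equality case of Weyl's inequality; what your route buys is a shorter, more elementary and self-contained proof, with the clean intermediate statement $q_{k+1}(G)=n-2\iff\dim\left(  \ker Q(\overline{G})\cap\mathbf{j}_{n}^{\perp}\right)  \geq k$, and it handles $k=1$ uniformly rather than deferring to Theorem \ref{th1}.
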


We conclude the paper by comparing $q_{2}\left(  G\right)  $ to the minimum
degree $\delta\left(  G\right)  $. In \cite{Das10}, Das proved that
\[
q_{2}(G)\geq\overline{d}\left(  G\right)  -1\text{ and }q_{2}(G)\geq\Delta
_{2}(G)-1,
\]
where $\overline{d}\left(  G\right)  $ and $\Delta_{2}(G)$ are the average and
the second largest degrees of $G,$ respectively. In the light of these
inequalities the bound given below looks easy, but its proof is not immediate
and it is sharp for many different kinds of graphs. To state the result, write
$K_{n_{1},n_{2},\ldots,n_{r}}$ for the complete $r$-partite graph with class
sizes $n_{1},\ldots,n_{r}.$

\begin{theorem}
\label{th3} If $G$ is a noncomplete graph of order $n$, then
\[
q_{2}\left(  G\right)  \geq\delta\left(  G\right)  .
\]
Equality holds if and only if $G$ is one of the following graphs: a star, a
complete regular multipartite graph, the graph $K_{1,3,3},$ or a complete
multipartite graph of the type $K_{1,\ldots,1,2,\ldots,2}$.
\end{theorem}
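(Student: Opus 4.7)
My plan is to prove the inequality by Cauchy interlacing on a $2\times 2$ principal submatrix, and then to extract the extremal graphs from the eigenvector that equality in that interlacing forces to exist. For the inequality, I would fix a vertex $v$ with $d(v)=\delta(G)$ and a non-neighbour $u$ of $v$, which exists because $G$ is noncomplete. Since $u\not\sim v$, the principal submatrix of $Q(G)$ indexed by $\{u,v\}$ is the diagonal matrix with entries $d(u)$ and $\delta$; its second eigenvalue is $\min\{d(u),\delta\}=\delta$, and Cauchy's interlacing theorem yields $q_{2}(G)\geq\delta$.

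For the equality case, assume $q_{2}(G)=\delta$ and invoke the classical equality statement in Cauchy interlacing, which provides an eigenvector $x=\alpha e_{u}+\beta e_{v}$ of $Q(G)$ with eigenvalue $\delta$ supported on $\{u,v\}$. Reading $Qx=\delta x$ coordinate by coordinate gives $\alpha(d(u)-\delta)=\beta(d(v)-\delta)=0$ and $\alpha A_{uw}+\beta A_{vw}=0$ for $w\neq u,v$, which -- after disposing of the degenerate case $\delta=0$ -- forces $d(u)=d(v)=\delta$, $\alpha+\beta=0$, and $N(u)=N(v)$. Thus $G$ must contain two non-adjacent twin vertices of minimum degree whose common neighbourhood $W$ has size $\delta$.

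The classification then splits on $\delta=n-2$ versus $\delta<n-2$. When $\delta=n-2$, the complement $\overline{G}$ has maximum degree at most $1$ and is therefore a disjoint union of $K_{2}$'s and isolated vertices; Theorem~\ref{th1} confirms $q_{2}(G)=n-2$ and identifies $G$ with the complete multipartite family $K_{1,\dots,1,2,\dots,2}$ (with at least one class of size $2$). When $\delta<n-2$, I would iterate: for every $z\in V\setminus(W\cup\{u,v\})$, the $3\times 3$ principal submatrix on $\{u,v,z\}$ equals $\mathrm{diag}(\delta,\delta,d(z))$, so interlacing combined with $q_{3}\leq q_{2}=\delta$ forces $q_{3}(G)=\delta$ and produces further $\delta$-eigenvectors of $Q(G)$ that progressively determine the structure of $G$, eventually leaving only the star $K_{1,n-1}$, the regular complete multipartite graphs $K_{r\times s}$ with $s\geq 3$, and the sporadic $K_{1,3,3}$. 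The main obstacle is this last step -- in particular singling out $K_{1,3,3}$ from competing multipartite candidates -- and I expect it to require direct computation of the $Q$-spectra of complete multipartite graphs together with the identity $Q(G)+Q(\overline{G})=(n-2)I+J$, which lets one transport information between $G$ and $\overline{G}$ and re-apply Theorem~\ref{th1} or Theorem~\ref{th2} in a refined form.
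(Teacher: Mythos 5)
Your proof of the inequality is correct and genuinely different from the paper's: you interlace with the $2\times 2$ principal submatrix $\mathrm{diag}(d(u),\delta)$ of $Q(G)$ on a non-adjacent pair, whereas the paper applies Weyl's inequality to $Q=A+D$ to get $q_{2}(G)\geq\lambda_{2}(G)+\delta(G)$ and then uses Smith's theorem ($\lambda_{2}(G)=0$ only for complete multipartite graphs) to pass immediately to complete multipartite graphs in the equality case. Your equality-case extraction is also sound (the tight-interlacing eigenvector supported on $\{u,v\}$ does force, for $\delta\geq1$, that $u,v$ are non-adjacent twins of degree $\delta$), and in fact you undersell it: since for \emph{every} non-adjacent pair $u,w$ the submatrix $\mathrm{diag}(d(u),d(w))$ has second eigenvalue at least $\delta=q_{2}(G)$, tightness holds for every such pair, so any two non-adjacent vertices are twins of degree $\delta$; hence non-adjacency is an equivalence relation and $G$ is complete multipartite with all classes of size at least $2$ of a common size $t$. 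This would replace your vaguer $\delta=n-2$ versus $\delta<n-2$ split and the proposed $3\times3$ iteration, and it reaches the same point the paper reaches via Smith's theorem.

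The genuine gap is what comes after that reduction. Interlacing and twin arguments cannot decide which graphs of the form $K_{1,\ldots,1,t,\ldots,t}$ actually satisfy $q_{2}=\delta$: the graphs $K_{1,3,3}$, $K_{1,4,4}$ and $K_{1,3,3,3}$ have identical twin structure, yet only the first attains equality. Separating them requires computing $q_{2}$ of these graphs explicitly; the paper does this with the $Q$-characteristic polynomial of $K_{1,t,\ldots,t}$ from \cite{FAVJ10}, showing $q_{2}=\frac{3tr-2t+1-\sqrt{t^{2}(r-2)^{2}+2t(3r-2)+1}}{2}>\delta$ precisely when $t>2+\frac{1}{r-1}$, which is exactly how $K_{1,3,3}$ (and the family with $t=2$) survives while all other candidates are eliminated, and how the bipartite and regular cases are settled. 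Your proposal stops short of this: the step is described as the ``main obstacle'' and deferred to an unperformed ``direct computation of the $Q$-spectra of complete multipartite graphs,'' and the converse direction (verifying that stars, regular complete multipartite graphs, $K_{1,3,3}$ and $K_{1,\ldots,1,2,\ldots,2}$ do attain equality) is likewise not carried out except for the $\delta=n-2$ family via Theorem \ref{th1}. As it stands, the characterization of equality --- the substance of Theorem \ref{th3} --- is not proved; you would need to add the spectral computation for $K_{n_1,n_2}$ and for $K_{1,t,\ldots,t}$ (or an equivalent quantitative argument) to close it, and also dispose explicitly of the $\delta=0$ edge case that your twin argument excludes.
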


\section{Notation and preliminary results}

In general, our notation follows \cite{B98}; thus, for a graph $G$ we write
$V\left(  G\right)  $ for the vertex set of $G$ and $E\left(  G\right)  $ for
the edge set of $G.$ We write $\overline{G}$ stands for the complement of $G,$
and $K_{n}$ for the complete graph on $n$ vertices.

Furthermore, given a Hermitian matrix $A$ of order $n,$ we index its
eigenvalues as $\lambda_{1}(A)\geq\cdots\geq\lambda_{n}(A).$ We retain the
same notation for the eigenvalues of the self-adjoint operator defined by the
matrix $A.$ The inner product of two vectors $\mathbf{x}$ and $\mathbf{y}$ is
denoted by $\left\langle \mathbf{x},\mathbf{y}\right\rangle $ and
$\mathbf{j}_{n}$ stands for the $n$-vector of all ones.\medskip

Recall that Desao and Rao in \cite{DR94}, Proposition 2, proved the important
fact that $0$ is an eigenvalue of $Q(G)$ if and only if $G$ has a bipartite
component. In fact, as shown in \cite{CRS07b}, Corollary 2.2, the following
precise statement holds.

\begin{theorem}
[\cite{CRS07b}, \cite{DR94}]\label{th4} Given a graph $G,$ the multiplicity of
$0$ as eigenvalue of $Q\left(  G\right)  $ is equal to the number of bipartite
components of $G.$
\end{theorem}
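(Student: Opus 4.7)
The plan is to use the factorization $Q(G)=RR^{T}$, where $R$ is the (unsigned) vertex--edge incidence matrix of $G$: rows are indexed by $V(G)$, columns by $E(G)$, and $R_{v,e}=1$ iff $v$ is incident to $e$. A direct entry-by-entry check confirms this identity (the diagonal picks up the degree, and for $u\neq v$ an off-diagonal entry equals $1$ exactly when $uv\in E(G)$). The factorization gives $Q(G)\succeq 0$, and more importantly the multiplicity of $0$ as an eigenvalue of $Q(G)$ equals $\dim\ker(RR^{T})=\dim\ker(R^{T})$. A vector $\mathbf{x}\in\mathbb{R}^{V(G)}$ lies in $\ker(R^{T})$ precisely when $x_{u}+x_{v}=0$ for every edge $uv\in E(G)$.

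Next I would decouple the problem across the connected components $C_{1},\dots,C_{m}$ of $G$: writing $R_{i}$ for the incidence matrix of $C_{i}$, one has $\ker(R^{T})=\bigoplus_{i=1}^{m}\ker(R_{i}^{T})$, so it suffices to show that $\dim\ker(R_{i}^{T})$ equals $1$ if $C_{i}$ is bipartite and $0$ otherwise. Fix $v_{0}\in V(C_{i})$ and set $x_{v_{0}}=t$. Propagating the relation $x_{u}=-x_{v}$ along edges and using connectedness, every $v\in V(C_{i})$ receives $x_{v}=(-1)^{\ell}t$ whenever $\ell$ is the length of a walk from $v_{0}$ to $v$. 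If $C_{i}$ is bipartite, all such walks from $v_{0}$ to a fixed vertex have the same parity, so the assignment $x_{v}=(-1)^{d(v_{0},v)}t$ is consistent and yields a one-dimensional solution space. If $C_{i}$ contains an odd closed walk through $v_{0}$, then two walks of opposite parity to some vertex force $t=-t$, hence $\ker(R_{i}^{T})=\{\mathbf{0}\}$. Summing over components yields the claimed multiplicity.

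The main obstacle is that the parity bookkeeping must be airtight: one has to verify that the alternating $\pm t$ assignment really is consistent on an entire bipartite component (not only along a spanning tree) and, conversely, that the existence of \emph{any} odd cycle is enough to collapse the component's contribution to $\{\mathbf{0}\}$. Both facts follow from the standard parity-of-walks characterisation of bipartite graphs, but they have to be invoked explicitly in order to pin down the exact multiplicity rather than merely to decide whether $0$ is or is not an eigenvalue of $Q(G)$.
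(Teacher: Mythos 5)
The paper does not prove this statement at all: it is quoted as a known result, with references to \cite{CRS07b} (Corollary 2.2) and \cite{DR94} (Proposition 2). Your argument is correct and complete, and it is essentially the standard proof from those sources: the factorization $Q(G)=RR^{T}$ with the unsigned incidence matrix $R$ gives $\dim\ker Q(G)=\dim\ker R^{T}$, the kernel condition is $x_{u}+x_{v}=0$ on every edge, and the component-by-component parity analysis (one-dimensional kernel for a bipartite component, trivial kernel in the presence of an odd closed walk) yields exactly the number of bipartite components. The only point to keep explicit, which you already flag, is that in a bipartite component all walks between two fixed vertices have the same parity, so the alternating assignment is globally consistent and not just consistent along a spanning tree.
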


In our proofs, we shall often use the following basic fact about $Q(G)$:
\emph{if }$G$ \emph{is a }$\emph{graph}$\emph{ order }$n$\emph{ and
}$\mathbf{x}=\left(  x_{1},\ldots,x_{n}\right)  $ \emph{is an }$n$%
\emph{-vector}$,$\emph{ then}
\begin{equation}
\left\langle Q(G)\mathbf{x},\mathbf{x}\right\rangle =\sum_{uv\in E\left(
G\right)  }\left(  x_{u}+x_{v}\right)  ^{2}. \label{Qf}%
\end{equation}

Also, in the proofs of Theorem \ref{th1} and Theorem \ref{th2}, we shall use
Weyl's inequalities for eigenvalues of Hermitian matrices. Although these
fundamental inequalities have been known for almost a century, it seems that
their equality case was first established only recently, by So in \cite{So94},
and his work was inspired by the paper of Ikebe, Inagaki and Miyamoto
\cite{IIM87}.

For convenience we state below the complete theorem of Weyl and So.\medskip

\begin{theorem}
[\cite{So94}]\label{th:weyl} Let $A$ and $B$ be Hermitian matrices of order
$n,$ and let $1\leq i\leq n$ and $1\leq j\leq n.$ Then
\begin{align}
\lambda_{i}(A)+\lambda_{j}(B)  &  \leq\lambda_{i+j-n}(A+B),\text{ if }i+j\geq
n+1,\label{Wein1}\\
\lambda_{i}(A)+\lambda_{j}(B)  &  \geq\lambda_{i+j-1}(A+B),\text{ if }i+j\leq
n+1. \label{Wein2}%
\end{align}
In either of these inequalities equality holds if and only if there exists a
nonzero $n$-vector that is an eigenvector to each of the three involved eigenvalues.
\end{theorem}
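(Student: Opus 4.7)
The plan is to prove both Weyl inequalities via the Courant--Fischer min-max characterization of eigenvalues combined with a subspace-intersection/dimension-count argument, and then to reduce the first inequality to the second by the substitution $A\mapsto -A$, $B\mapsto -B$, using $\lambda_k(-M)=-\lambda_{n-k+1}(M)$. The equality case is extracted by tracing back through the inequality chain and using strict monotonicity in the Rayleigh quotient.

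For (\ref{Wein2}), I would fix orthonormal eigenbases $u_1,\dots,u_n$ of $A$ and $v_1,\dots,v_n$ of $B$ with eigenvalues in non-increasing order, and set $U=\mathrm{span}(u_i,\dots,u_n)$ and $V=\mathrm{span}(v_j,\dots,v_n)$, so that $\dim U=n-i+1$ and $\dim V=n-j+1$. The key observation is $\dim(U\cap V)\geq n-i-j+2\geq 1$ (using $i+j\leq n+1$). For any unit $x\in U$ one has $\langle Ax,x\rangle\leq\lambda_i(A)$, and analogously for $B$ on $V$, so for every unit $x\in U\cap V$,
\[
\langle(A+B)x,x\rangle\leq\lambda_i(A)+\lambda_j(B).
\]
Choosing any subspace $S\subseteq U\cap V$ with $\dim S=n-i-j+2$ and applying
$\lambda_{i+j-1}(A+B)=\min_{\dim S=n-i-j+2}\max_{x\in S,\,\|x\|=1}\langle(A+B)x,x\rangle$
gives (\ref{Wein2}).

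For the equality case, equality forces $\lambda_{i+j-1}(A+B)=\max_{x\in S,\,\|x\|=1}\langle(A+B)x,x\rangle=\lambda_i(A)+\lambda_j(B)$, so a unit $x_0\in S$ achieves the max. Since $\langle(A+B)x_0,x_0\rangle=\lambda_i(A)+\lambda_j(B)$ and $\langle Ax_0,x_0\rangle\leq\lambda_i(A)$, $\langle Bx_0,x_0\rangle\leq\lambda_j(B)$, both inequalities are equalities. Expanding $x_0=\sum_{k\geq i}c_k u_k$, the identity $\sum_{k\geq i}\lambda_k(A)|c_k|^2=\lambda_i(A)\sum|c_k|^2$ forces $c_k=0$ whenever $\lambda_k(A)<\lambda_i(A)$; hence $Ax_0=\lambda_i(A)x_0$. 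An identical argument gives $Bx_0=\lambda_j(B)x_0$, and then $(A+B)x_0=(\lambda_i(A)+\lambda_j(B))x_0=\lambda_{i+j-1}(A+B)x_0$, so $x_0$ is a common eigenvector to the three eigenvalues. The converse is immediate: if $x\neq 0$ is eigenvector for all three, then $(A+B)x=Ax+Bx=(\lambda_i(A)+\lambda_j(B))x=\lambda_{i+j-1}(A+B)x$ forces equality. Finally, (\ref{Wein1}) and its equality case follow from (\ref{Wein2}) applied to $-A,-B$ at indices $n-i+1,n-j+1$, noting that eigenvectors of $-A,-B,-(A+B)$ coincide with those of $A,B,A+B$.

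The main obstacle is the eigenspace-extraction step in the equality case: proving that the extremal vector $x_0$ lies not merely in the tail subspace $U$ but in the genuine $\lambda_i(A)$-eigenspace of $A$ (and analogously for $B$). This is exactly the strict-monotonicity refinement that goes beyond the classical Weyl argument, and it is the contribution that So extracts from the technique of Ikebe, Inagaki, and Miyamoto.
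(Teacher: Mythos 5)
Your argument is correct, but note that the paper itself offers no proof of Theorem \ref{th:weyl}: it is quoted as a known result, the inequalities being Weyl's and the equality characterization being So's \cite{So94}, so the only comparison available is with the literature rather than with an in-paper argument. What you give is the classical Courant--Fischer proof of (\ref{Wein2}) --- tail eigenspaces $U,V$, the dimension count $\dim(U\cap V)\ge n-i-j+2\ge 1$, and the min--max bound for $\lambda_{i+j-1}(A+B)$ --- augmented with the Rayleigh-quotient rigidity step: at equality the maximum over the unit sphere of $S\subseteq U\cap V$ is attained (compactness) at some $x_0$ with $\langle Ax_0,x_0\rangle=\lambda_i(A)$ and $\langle Bx_0,x_0\rangle=\lambda_j(B)$, and expanding in the eigenbasis kills every coefficient attached to an eigenvalue strictly below $\lambda_i(A)$ (resp.\ $\lambda_j(B)$), so $x_0$ is a genuine common eigenvector; the converse is the trivial direction, and (\ref{Wein1}) with its equality case follows by negation with the index flip $i\mapsto n-i+1$, $j\mapsto n-j+1$, since $\lambda_k(-M)=-\lambda_{n-k+1}(M)$ and eigenvectors are unchanged. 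This is essentially the Ikebe--Inagaki--Miyamoto technique that So builds on, and it is a legitimate self-contained substitute for the citation; the only points worth making explicit in a final write-up are the attainment of the maximum and the observation that, under equality, the value $\lambda_i(A)+\lambda_j(B)$ is literally the eigenvalue $\lambda_{i+j-1}(A+B)$, so $x_0$ is an eigenvector to all three \emph{involved} eigenvalues as the statement demands.
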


Theorem \ref{th:weyl} is crucial for our proof of Theorem \ref{th1}, but it is
not general enough for the more complicated Theorem \ref{th2}. Therefore, we
need a strengthening of Theorem \ref{th:weyl} in a particular situation. To
begin with, note that Theorem \ref{th:weyl} can be stated equivalently if we
replace \textquotedblleft Hermitian matrices\textquotedblright\ by
\textquotedblleft self-adjoint linear operators\textquotedblright; indeed, the
latter setup seems even more natural.

\begin{proposition}
\label{pro1} Let $2\leq k<n$ and $A$ and $B$ be self-adjoint operators of
order $n.$ If for every $s=2,\ldots,k,$
\begin{equation}
\lambda_{s}\left(  A\right)  +\lambda_{n}\left(  B\right)  =\lambda_{s}\left(
A+B\right)  , \label{con1}%
\end{equation}
then there exist $k-1$ nonzero orthogonal $n$-vectors $\mathbf{x}^{1}%
,\ldots,\mathbf{x}^{k-1}$ such that%
\begin{equation}
A\mathbf{x}^{s-1}=\lambda_{s}\left(  A\right)  \mathbf{x}^{s-1},\text{\ \ }%
B\mathbf{x}^{s-1}=\lambda_{n}\left(  B\right)  \mathbf{x}^{s-1},\text{\ \ and
\ }\left(  A+B\right)  \mathbf{x}^{s-1}=\lambda_{s}\left(  A+B\right)
\mathbf{x}^{s-1} \label{req}%
\end{equation}
for every $s=2,\ldots,k.$
\end{proposition}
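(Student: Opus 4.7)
The plan is to proceed by induction on $k$. The base case $k=2$ is essentially given: the hypothesis is exactly the equality in Weyl's inequality (\ref{Wein1}) at $i=2$, $j=n$, so Theorem \ref{th:weyl} directly supplies a single nonzero vector $\mathbf{x}^1$ satisfying the three required eigen-equations.

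For the inductive step I would apply the statement for $k-1$ to produce orthogonal vectors $\mathbf{x}^1,\ldots,\mathbf{x}^{k-2}$ satisfying (\ref{req}) for $s=2,\ldots,k-1$, and set $V=\operatorname{span}(\mathbf{x}^1,\ldots,\mathbf{x}^{k-2})^\perp$, of dimension $n':=n-k+2$. Since every $\mathbf{x}^i$ is a common eigenvector of $A$, $B$, and $A+B$, the subspace $V$ is simultaneously invariant under all three operators, each of which then splits orthogonally into its restrictions to $V$ and $V^\perp$. Reading off the action on $V^\perp$, the multiset of eigenvalues of $A|_V$ is that of $A$ with one copy each of $\lambda_2(A),\ldots,\lambda_{k-1}(A)$ removed, and resorting in decreasing order yields $\lambda_2(A|_V)=\lambda_k(A)$; identical bookkeeping gives $\lambda_2((A+B)|_V)=\lambda_k(A+B)$ and shows that the spectrum of $B|_V$ is that of $B$ with $k-2$ copies of $\beta:=\lambda_n(B)$ removed, so in particular every eigenvalue of $B|_V$ is still at least $\beta$.

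Applying Weyl's inequality (\ref{Wein1}) to the order-$n'$ operators $A|_V$ and $B|_V$ with $i=2$, $j=n'$, and combining with the hypothesis at $s=k$, gives the chain
\[
\lambda_k(A)+\beta\;\leq\;\lambda_2(A|_V)+\lambda_{n'}(B|_V)\;\leq\;\lambda_2((A+B)|_V)\;=\;\lambda_k(A+B)\;=\;\lambda_k(A)+\beta,
\]
so equality holds throughout and in particular $\lambda_{n'}(B|_V)=\beta$. Theorem \ref{th:weyl} applied inside $V$ then furnishes a nonzero vector $\mathbf{x}^{k-1}\in V$ which is at once an eigenvector of $A|_V$ for $\lambda_k(A)$, of $B|_V$ for $\beta$, and of $(A+B)|_V$ for $\lambda_k(A+B)$. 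Invariance of $V$ promotes these into the full-space eigen-equations (\ref{req}) at $s=k$, and $\mathbf{x}^{k-1}\perp\mathbf{x}^i$ for all $i\leq k-2$ is built into the definition of $V$.

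The main obstacle I expect is the step forcing $\lambda_{n'}(B|_V)=\beta$. A priori the eigenvalue $\beta$ of $B$ might have multiplicity exactly $k-2$, in which case stripping off the previous eigenvectors would eliminate $\beta$ from the spectrum of $B|_V$ entirely and push its smallest eigenvalue strictly above $\beta$, killing the equality case of Weyl's theorem on $V$. It is precisely the new hypothesis at $s=k$, channeled through the displayed chain, that rules this out and keeps the induction alive.
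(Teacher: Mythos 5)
Your proof is correct, and it uses the same basic machinery as the paper -- induction on $k$, deflation onto the invariant orthogonal complement of already-found common eigenvectors, and So's equality case of Weyl's inequality (Theorem \ref{th:weyl}) -- but it runs the induction in the opposite order. The paper first extracts the common eigenvector attached to the $s=2$ equality in the full space, passes to the orthogonal complement $H$ of that single vector, and applies the induction hypothesis to the restrictions $A|H$, $B|H$; you instead invoke the induction hypothesis first, pass to the complement $V$ of the span of $\mathbf{x}^1,\ldots,\mathbf{x}^{k-2}$, and extract the new vector at the bottom index $s=k$ inside $V$. A genuine advantage of your ordering is that you confront explicitly the multiplicity issue for $\beta=\lambda_n(B)$: your squeeze $\lambda_k(A)+\beta\le\lambda_2(A|_V)+\lambda_{n'}(B|_V)\le\lambda_2((A+B)|_V)=\lambda_k(A+B)=\lambda_k(A)+\beta$ forces $\lambda_{n'}(B|_V)=\beta$, so the hypothesis at $s=k$ itself rules out the scenario you flag as the main obstacle. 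The paper's write-up contains the mirror-image subtlety and glosses over it: its claim that the equalities (\ref{con1}) descend to $\lambda_s(A')+\lambda_{n-1}(B')=\lambda_s(A'+B')$ tacitly requires $\lambda_{n-1}(B)=\lambda_n(B)$, which is not verified there but can be forced by exactly the same Weyl-inequality squeeze you use. So your argument is a valid, slightly reorganized version of the paper's proof, and on this point the more carefully justified of the two.
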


\begin{proof}
Our proof is by induction on $k.$ For $k=2$ the assertion follows from Theorem
\ref{th:weyl} since we have to find a single vector satisfying the
requirements (\ref{req}). Assume now that $k>2$ and that the assertion holds
for $2\leq k^{\prime}<k.$ By Theorem \ref{th:weyl}, there exists a nonzero
vector $\mathbf{x=x}^{k-1}$ such that%
\[
A\mathbf{x}^{k-1}=\lambda_{2}\left(  A\right)  \mathbf{x}^{k-1},\text{\ \ }%
B\mathbf{x}^{k-1}=\lambda_{n}\left(  B\right)  \mathbf{x}^{k-1}\text{ \ and
\ }\left(  A+B\right)  \mathbf{x}^{k-1}=\lambda_{2}\left(  A+B\right)
\mathbf{x}^{k-1}.
\]
Write $H$ for the orthogonal complement of $\mathbf{x}^{k-1}.$ Since $A$ and
$B$ are self-adjoint, $H$ is an invariant subspace of the three operators $A,$
$B$ and $A+B.$ Set $A^{\prime}=A|H$ and $B^{\prime}=B|H;$ then clearly
$A^{\prime}+B^{\prime}=\left(  A+B\right)  |H.$ Note that
\begin{align*}
\lambda_{1}\left(  A^{\prime}\right)   &  =\lambda_{1}\left(  A\right)
,\text{ \ }\lambda_{2}\left(  A^{\prime}\right)  =\lambda_{3}\left(  A\right)
,\text{ \ }\ldots\text{ \ },\lambda_{n-1}\left(  A^{\prime}\right)
=\lambda_{n}\left(  A\right)  ,\text{\ }\\
\lambda_{1}\left(  B^{\prime}\right)   &  =\lambda_{1}\left(  B\right)
,\text{ \ }\lambda_{2}\left(  B^{\prime}\right)  =\lambda_{2}\left(  B\right)
,\text{ \ }\ldots\text{\ \ },\lambda_{n-1}\left(  B^{\prime}\right)
=\lambda_{n-1}\left(  B\right)  ,\\
\lambda_{1}\left(  A^{\prime}+B^{\prime}\right)   &  =\lambda_{1}\left(
A+B\right)  ,\text{ \ }\lambda_{2}\left(  A^{\prime}+B^{\prime}\right)
=\lambda_{3}\left(  A+B\right)  ,\text{\ \ }\ldots\text{ \ },\lambda
_{n-1}\left(  A^{\prime}+B^{\prime}\right)  =\lambda_{n}\left(  A+B\right)  .
\end{align*}
Hence, equalities (\ref{con1}) imply that
\[
\lambda_{s}A^{\prime}+\lambda_{n-1}B^{\prime}=\lambda_{s}\left(  A^{\prime
}+B^{\prime}\right)  ,
\]
for $s=2,\ldots,k-1.$ Therefore, there exist $k-2$ nonzero orthogonal vectors
$\mathbf{y}^{1},\ldots,\mathbf{y}^{k-2}$ in $H$ such that%
\[
A^{\prime}\mathbf{y}^{s-1}=\lambda_{s}(A^{\prime})\mathbf{y}^{s-1},\text{
\ \ }B^{\prime}\mathbf{y}^{s-1}=\lambda_{n-1}(B^{\prime})\mathbf{y}%
^{s-1},\text{\ \ \ and \ \ }\left(  A^{\prime}+B^{\prime}\right)
\mathbf{y}^{s-1}=\lambda_{s}\left(  A^{\prime}+B^{\prime}\right)
\mathbf{y}^{s-1}%
\]
Considering $H$ as a subspace of $\mathbb{C}^{n},$ the vectors $\mathbf{y}%
^{1},\ldots,\mathbf{y}^{k-2}$ correspond to $n$-vectors $\mathbf{x}^{1}%
,\ldots,\mathbf{x}^{k-2},$ which together with $\mathbf{x}^{k-1}$ have the
desired properties. This completes the induction step and the proof of the proposition.
\end{proof}

We note that the above proposition is tailored to our needs; clearly other
generalizations in the same vein are possible.\bigskip

\section{Proofs of Theorems \ref{th1}, \ref{th2} and \ref{th3}}

\begin{proof}
[\textbf{Proof of Theorem \ref{th1}}]Assume first that $q_{2}(G)=n-2.$
Applying Weyl's inequality (\ref{Wein1}), we find that
\begin{equation}
q_{2}\left(  G\right)  +q_{n}\left(  \overline{G}\right)  \leq q_{2}\left(
K_{n}\right)  . \label{Win}%
\end{equation}
Since $q_{2}\left(  K_{n}\right)  =n-2,$ we see that $q_{n}\left(
\overline{G}\right)  =0$ and Theorem \ref{th4} implies that $\overline{G}$ has
a bipartite component. Also, since equality holds in (\ref{Win}), by Theorem
\ref{th:weyl}, there exists a unit vector $\mathbf{x}=\left(  x_{1}%
,\ldots,x_{n}\right)  $ that is an eigenvector to each of the eigenvalues
$q_{2}\left(  G\right)  ,q_{n}\left(  \overline{G}\right)  $ and $q_{2}\left(
K_{n}\right)  .$ The latter implies that $\sum_{i=1}^{n}x_{i}=0$ as
$\mathbf{x}$ is orthogonal to the eigenspace of $q_{1}\left(  K_{n}\right)  $,
which is $Span\left(  \mathbf{j}_{n}\right)  $.

Using (\ref{Qf}), we see that
\begin{equation}
0=q_{n}(\overline{G})=\left\langle Q(\overline{G})\mathbf{x},\mathbf{x}%
\right\rangle =\sum_{ij\in E\left(  \overline{G}\right)  }\left(  x_{i}%
+x_{j}\right)  ^{2}. \label{Gb}%
\end{equation}
Therefore, if $\overline{G}$ has just one bipartite component, say $H$, we
have $x_{w}=0$ for all vertices $w\in V\left(  G\right)  \backslash V\left(
H\right)  $ and $x_{u}=-x_{v}$ for each edge $uv\in E\left(  H\right)  .$ This
means that for every $u\in V\left(  H\right)  $ the entry $x_{u}$ takes one of
two possible values, which have opposite signs.\ Now the condition $\sum
_{i=1}^{n}x_{i}=0$ implies that the vertex classes of $H$ are equal in size
and so $H$ is balanced. This completes the proof of the \textquotedblleft only
if\textquotedblright\ part of the theorem.

Assume now that $\overline{G}$ has a balanced bipartite component, and let $U$
and $W$ be its vertex classes. Define a vector $\mathbf{x}=\left(
x_{1},\ldots,x_{n}\right)  $ as
\[
x_{u}=\left\{
\begin{array}
[c]{rl}%
1, & \text{if }u\in U;\\
-1, & \text{if }u\in W;\\
0, & \text{if }u\in V\left(  G\right)  \backslash\left(  U\cup W\right)  .
\end{array}
\right.
\]
Since $\left\vert U\right\vert =\left\vert W\right\vert ,$ we see that
$q_{n}\left(  \overline{G}\right)  \left\Vert \mathbf{x}\right\Vert
^{2}=\left\langle Q\left(  \overline{G}\right)  \mathbf{x},\mathbf{x}%
\right\rangle =0$ and so $\mathbf{x}$ is an eigenvector to $q_{n}\left(
\overline{G}\right)  .$ Also, $\sum_{i=1}^{n}x_{i}=0$ and so $\mathbf{x}$ is
orthogonal to $Span\left(  \mathbf{j}_{n}\right)  ;$ therefore $\mathbf{x}$ is
an eigenvector to $q_{2}\left(  K_{n}\right)  .$ Hence $n-2$ is an eigenvalue
to $Q(G)$ with eigenvector $\mathbf{x}$. If $G$ is connected, the
Perron-Frobenius theorem implies that $\mathbf{x}$ is not an eigenvector to
$q_{1}\left(  G\right)  $ because it has negative entries; therefore,
$q_{2}\left(  G\right)  =n-2.$ If $G$ is not connected, then $\overline{G}$ is
a connected balanced bipartite graph and so $G=2K_{n/2};$ therefore
$q_{2}\left(  G\right)  =n-2.$

Let now $\overline{G}$ have two bipartite components, which can be assumed
unbalanced as otherwise the proof is completed by the previous argument.
Denote the vertex classes of the one component by $U,W$ and the parts of the
other by $X,Y,$ and define a vector $\mathbf{x}=\left(  x_{1},\ldots
,x_{n}\right)  $ as
\[
x_{u}=\left\{
\begin{array}
[c]{rl}%
1, & \text{if }u\in U;\\
-1, & \text{if }u\in W;\\
\frac{\left\vert W\right\vert -\left\vert U\right\vert }{|X|-|Y|}, & \text{if
}u\in X;\\
\frac{\left\vert U\right\vert -\left\vert W\right\vert }{|X|-|Y|}, & \text{if
}u\in Y;\\
0, & \text{if }u\in V\left(  G\right)  \backslash\left(  U\cup W\cup U\cup
W\right)  .
\end{array}
\right.
\]
Since for each edge $uv\in E\left(  \overline{G}\right)  ,$ we have
$x_{u}=-x_{v},$ it turns out $q_{n}\left(  \overline{G}\right)  \left\Vert
\mathbf{x}\right\Vert ^{2}=\left\langle Q\left(  \overline{G}\right)
\mathbf{x},\mathbf{x}\right\rangle =0$ and $\mathbf{x}$ is an eigenvector to
$q_{n}\left(  \overline{G}\right)  .$ Also, we find that%
\[
\sum_{i=1}^{n}x_{i}=\left\vert U\right\vert -\left\vert W\right\vert
+\left\vert X\right\vert \frac{\left\vert W\right\vert -\left\vert
U\right\vert }{|X|-|Y|}+\left\vert Y\right\vert \frac{\left\vert U\right\vert
-\left\vert W\right\vert }{|X|-|Y|}=0,
\]
and so $\mathbf{x}$ is orthogonal to $Span\left(  \mathbf{j}_{n}\right)  $ and
therefore $\mathbf{x}$ is an eigenvector to $q_{2}(K_{n}).$ Hence $n-2$ is an
eigenvalue to $Q(G)$ with eigenvector $\mathbf{x.}$ Since $\overline{G}$ has
at least two components, $G$ is connected and the Perron-Frobenius theorem
implies that and $\mathbf{x}$ is not an eigenvector to $q_{1}\left(  G\right)
$ because it has negative entries; therefore $q_{2}\left(  G\right)  =n-2.$
This completes the proof of the theorem.
\end{proof}

\begin{proof}
[\textbf{Proof of Theorem \ref{th2}}]Note that Theorem \ref{th1} covers the
case $k=1,$ so we shall assume that $k>1.$ For convenience we split the
theorem into two statements:

(A) If $q_{k+1}\left(  G\right)  =n-2,$ then $\overline{G}$ has either $k$
balanced bipartite components or at least $k+1$ bipartite components;

(B) If $\overline{G}$ has either $k$ balanced bipartite components or $k+1$
bipartite components, then $q_{k+1}\left(  G\right)  =n-2.$

First we prove (A). If $q_{k+1}\left(  G\right)  =n-2,$ then obviously
\[
q_{2}\left(  G\right)  =\cdots=q_{k+1}\left(  G\right)  =n-2,
\]
and so, for $i=2,\ldots,k+1,$ Weyl's inequalities (\ref{Wein1}) and
(\ref{Wein2}) imply that
\begin{equation}
n-2\leq q_{i}\left(  G\right)  +q_{n}\left(  \overline{G}\right)  \leq
q_{i}\left(  K_{n}\right)  =n-2. \label{Win1}%
\end{equation}
We see that for every $i=2,\ldots,k+1,$ equality holds throughout
(\ref{Win1}). In view of this fact, Proposition \ref{pro1} implies that there
exist $k$ nonzero orthogonal $n$-vectors $\mathbf{x}^{1},\ldots,\mathbf{x}%
^{k}$ such that for $s=1,\ldots,k,$
\[
Q(G)\mathbf{x}^{s}=q_{s+1}\mathbf{x}^{s},\text{ \ }Q(\overline{G}%
)\mathbf{x}^{s}=q_{n}\left(  \overline{G}\right)  \mathbf{x}^{s},\text{\ \ and
\ }Q\left(  K_{n}\right)  \mathbf{x}^{s}=q_{s+1}\left(  K_{n}\right)
\mathbf{x}^{s}.
\]
These equalities in particular imply that
\[
q_{n}\left(  \overline{G}\right)  =\cdots=q_{n+1-k}\left(  \overline
{G}\right)  =0.
\]
Hence, by Theorem \ref{th4}, $\overline{G}$ has at least $k$ bipartite
components. For every $s=1,\ldots,k,$ set $\mathbf{x}^{s}=\left(  x_{1}%
^{s},\ldots,x_{n}^{s}\right)  ,$ and note that $\sum_{i=1}^{n}x_{i}^{s}=0$
since $\mathbf{x}^{s}$ is orthogonal to the eigenspace of $q_{1}\left(
K_{n}\right)  ,$ which is $Span\left(  \mathbf{j}_{n}\right)  $.

To complete the proof of (A) we have to show that if $\overline{G}$ has
exactly $k$ bipartite components, then they are all balanced. For every
$s=1,\ldots,k,$ let $U_{s}$ and $W_{s}$ be the vertex classes of the $s$'th
bipartite component of $\overline{G}$ and write $V_{0\text{\ }}$ for the set
of vertices of $\overline{G}$ that do not belong to any bipartite component of
$\overline{G}.$ Since
\begin{equation}
0=q_{n+1-s}(\overline{G})=\left\langle Q(\overline{G})\mathbf{x}%
^{s},\mathbf{x}^{s}\right\rangle =\sum_{ij\in E\left(  \overline{G}\right)
}\left(  x_{i}^{s}+x_{j}^{s}\right)  ^{2}\label{Gb1}%
\end{equation}
for every edge $uv\in E\left(  \overline{G}\right)  ,$ we have $x_{u}%
^{s}=-x_{v}^{s}.$ It follows that $x_{u}^{s}=0$ if $u\in V_{0},$ and
$x_{i}^{s}=x_{j}^{s}$ if $i$ and $j$ belong to the same $U_{i}$ or the same
$W_{i}$. Hence for every $s=1,\ldots,k,$ there exist $k$ numbers $a_{1}%
^{s},\ldots,a_{k}^{s},$ such that
\[
x_{u}^{s}=\left\{
\begin{array}
[c]{rl}%
a_{i}^{s}, & \text{if }u\in U_{i};\\
-a_{i}^{s}, & \text{if }u\in W_{i};\\
0, & \text{if }u\in V_{0}.
\end{array}
\right.
\]
Now from $\sum_{i=1}^{n}x_{i}^{s}=0$ we see that $p_{1}a_{1}^{s}+\cdots
+p_{k}a_{k}^{s}=0,$ where $p_{i}=\left\vert W_{i}\right\vert -\left\vert
U_{i}\right\vert ,$ $1\leq i\leq k.$ Let $B$ be the $k\times n$ matrix whose
rows are the vectors $\mathbf{x}^{1},\ldots,\mathbf{x}^{k}.$ Since
$rank\left(  B\right)  =k,$ there exists $k$ independent columns of $B,$ say
the columns $\mathbf{c}_{1},\ldots,\mathbf{c}_{k}$ corresponding to the
vertices $u_{1},\ldots,u_{k}.$ Since every two of these columns are linearly
independent, the vertices $u_{1},\ldots,u_{k}$ belong to different components.
Define $q_{1},\ldots,q_{k}$ \ by%
\[
q_{i}=\left\{
\begin{array}
[c]{rl}%
p_{i}, & \text{if }u\in U_{i};\\
-p_{i}, & \text{if }u\in W_{i};
\end{array}
\right.
\]
and let%
\[
\mathbf{y}=\left(  y_{1},\ldots,y_{n}\right)  =q_{1}\mathbf{c}_{1}%
+\cdots+q_{k}\mathbf{c}_{k}.
\]
Our settings imply that for every $i=1,\ldots,k,$
\[
q_{i}\mathbf{c}_{i}=\left(  p_{i}a_{i}^{1},p_{i}a_{i}^{2},\ldots,p_{i}%
a_{i}^{k}\right)  ^{T}.
\]
Then, for every $s=1,\ldots,k,$
\[
y_{s}=p_{1}a_{1}^{s}+\cdots+p_{k}a_{k}^{s}=0,
\]
implying that $\mathbf{y}=0$ and so\textbf{ }$q_{1}=\cdots=q_{k}=0.$ Thus all
bipartite components of $\overline{G}$ are balanced, completing the proof of (A).

Now let us prove (B). Suppose that $\overline{G}$ has $k$ balanced bipartite
components, say with vertex classes $U_{i}$ and $W_{i},$ $i=1,\ldots,k.$ For
every $s=1,\ldots,k,$ define a vector $\mathbf{x}^{s}=\left(  x_{1}^{s}%
,\ldots,x_{n}^{s}\right)  $ by%
\[
x_{u}^{s}=\left\{
\begin{array}
[c]{rl}%
1, & \text{if }u\in U_{s};\\
-1, & \text{if }u\in W_{s};\\
0, & \text{if }u\in V\left(  G\right)  \backslash\left(  U_{s}\cup
W_{s}\right)  .
\end{array}
\right.
\]
Since $\left\vert U_{s}\right\vert =\left\vert W_{s}\right\vert ,$ we see that
$\left\langle Q\left(  \overline{G}\right)  \mathbf{x}^{s},\mathbf{x}%
^{s}\right\rangle =0;$ also, $\sum_{j=1}^{n}x_{j}^{s}=0$ and so $\mathbf{x}%
^{s}$ is an eigenvector to $q_{s+1}\left(  K_{n}\right)  .$ Therefore, $n-2$
is an eigenvalue to $Q(G)$ with eigenvector $\mathbf{x}^{s}.$ As the vectors
$\mathbf{x}^{1},\ldots,\mathbf{x}^{k}$ are orthogonal, we see that that $n-2$
is an eigenvalue of $Q\left(  G\right)  $ with multiplicity at least $k.$ To
complete the proof in this case, note that none of the vectors $\mathbf{x}%
^{1},\ldots,\mathbf{x}^{k}$ can be an eigenvector to $q_{1}\left(  G\right)  $
since $G$ is connected and each of the vectors $\mathbf{x}^{1},\ldots
,\mathbf{x}^{k}$ has negative entries.

Now assume that $\overline{G}$ has $k+1$ bipartite components, let $U_{s}$ and
$W_{s}$ be the vertex classes of the $s$'th bipartite component of
$\overline{G}$ and set $p_{s}=\left\vert W_{s}\right\vert -\left\vert
U_{s}\right\vert .$ Write $V_{0\text{\ }}$ for the set of all vertices of
$\overline{G}$ that do not belong to any bipartite component of $\overline
{G}.$

To complete the proof of (B), we shall show that there exists $k$ linearly
independent vectors $\mathbf{y}^{i},$ each orthogonal to $\mathbf{j}_{n}$ and
satisfying $\left\langle Q\left(  \overline{G}\right)  \mathbf{y}%
^{i},\mathbf{y}^{i}\right\rangle =0.$ Indeed, in this case each $\mathbf{y}%
^{i}$ is an eigenvector to $q_{2}\left(  K_{n}\right)  =n-2$ and to
$q_{n}\left(  \overline{G}\right)  =0,$ implying that $Q\left(  G\right)
\mathbf{y}^{i}=\left(  n-2\right)  \mathbf{y}^{i}.$ Hence $n-2$ is an
eigenvalue of $Q\left(  G\right)  $ with multiplicity at least $k.$

Consider the $k$-dimensional linear space $L$ of all $\left(  k+1\right)
$-vectors $\left(  a_{1},\ldots,a_{k+1}\right)  $ satisfying%
\[
p_{1}a_{1}+\cdots+p_{k+1}a_{k+1}=0
\]
and choose $k$ linearly independent vectors $\mathbf{a}^{1},\ldots
,\mathbf{a}^{k}$ in $L.$ Now, for every $s=1,\ldots,k,$ let $\mathbf{a}%
^{s}=\left(  a_{1}^{s},\ldots,a_{k+1}^{s}\right)  $ and define the $n$-vector
$\mathbf{y}^{s}=\left(  y_{1}^{s},\ldots,y_{n}^{s}\right)  $ by%
\[
y_{u}^{s}=\left\{
\begin{array}
[c]{rl}%
a_{i}^{s}, & \text{if }u\in U_{i};\\
-a_{i}^{s}, & \text{if }u\in W_{i};\\
0, & \text{if }u\in V_{0}.
\end{array}
\right.
\]
We shall show that the vectors $\mathbf{y}^{1},\ldots,\mathbf{y}^{k}$ satisfy
the requirements. Indeed for every $s=1,\ldots,k,$%
\[
\sum_{u\in V}y_{u}^{s}=p_{1}a_{1}+\cdots+p_{k+1}a_{k+1}=0;
\]
hence, each $\mathbf{y}^{s}$ is orthogonal to $\mathbf{j}_{n}.$ Also
$y_{u}^{s}=-y_{v}^{s}$ for every edge $uv\in E\left(  \overline{G}\right)  $;
hence $\left\langle Q\left(  \overline{G}\right)  \mathbf{y}^{s}%
,\mathbf{y}^{s}\right\rangle =0$ for every $s=1,\ldots,k.$

Finally assume that
\[
c_{1}\mathbf{y}^{1}+\cdots+c_{k}\mathbf{y}^{k}=0.
\]
For every $i=1,\ldots,k+1$, choose a vertex $u\in U_{i}$ and note that
\[
c_{1}a_{i}^{1}+\cdots+c_{k}a_{i}^{k}=c_{1}y_{u}^{1}+\cdots+c_{k}y_{u}%
^{k}=0,\text{ }%
\]
This implies that
\[
c_{1}\mathbf{a}^{1}+\cdots+c_{k}\mathbf{a}^{k}=0,
\]
and since $\mathbf{a}^{1},\ldots,\mathbf{a}^{k}$ are linearly independent, it
turns out that $c_{1}=$ $\cdots=c_{k}=0;$ hence $\mathbf{y}^{1},\ldots
,\mathbf{y}^{k}$ are also linearly independent. This completes the proof of
(B) and of Theorem \ref{th2}.
\end{proof}

\vspace{4mm}

\begin{proof}
[{\textbf{Proof of Theorem \ref{th3}}} ]Applying Weyl's inequality
(\ref{Wein1}) we find that
\begin{equation}
q_{2}(G)\geq\lambda_{2}(G)+\delta(G), \nonumber\label{eq:last}%
\end{equation}
where $\lambda_{2}(G)$ is the second largest eigenvalue of the adjacency
matrix of $G$. Since $\lambda_{2}(G)\geq0$ with equality only for the complete
multipartite graphs with possibly isolated vertices (this was proved by Smith
in \cite{Smi70}), it follows that $q_{2}(G)\geq\delta(G),$ and equality is
possible only if $G\ $is a complete multipartite graph. Let thus
$q_{2}(G)=\delta(G)$ and $G$ be a complete $r$-partite graph with part sizes
$n_{1}\leq n_{2}\leq\cdots\leq n_{r}.$ In this case, $q_{2}(G)=\delta
(G)=n-n_{r}.$ If $r=2,$ it is known, that
\[
q_{2}\left(  K_{n_{1},n_{2}}\right)  =\left\{
\begin{array}
[c]{rl}%
1, & \text{if }n_{1}=1,\\
n_{2}, & \text{if }n_{1}\geq2.
\end{array}
\right.
\]
Hence $n_{1}=1$ or $n_{1}=n_{2},$ which completes the proof for $r=2.$ Let now
$r\geq3.$ If $n_{1}\geq2,$ then $G$ contains $K_{n_{1},n-n_{1}},$ and so
\[
n-n_{r}=q_{2}(G)\geq n-n_{1},
\]
implying that $G$ is regular complete multipartite graph.

Let now $n_{1}=1.$ If $n_{2} \leq2,$ then $G$ contains $K_{2,n-2}$ and so
\[
n-n_{r}=q_{2}(G)=n-2.
\]
Therefore $G=K_{1,\ldots,1,2,\ldots,2}.$

If $n_{2}>2,$ then $G$ contains $K_{n_{2},n-n_{2}}$ and so
\[
n-n_{r}=q_{2}(G)\geq n-n_{2},
\]
implying that $G=K_{1,t,\ldots,t}$ for some $t\geq2.$ We shall show that this
is only possible if $t=2,$ or $r=3$ and $t=3.$

As proved in \cite{FAVJ10}, the characteristic polynomial of the $Q$-matrix of
the $\left(  r+1\right)  $-partite graph $G=K_{1,t,\ldots,t}$ satisfies
\[
P_{Q}(G,x)=(x-tr+t-1)^{r(t-1)}(x-tr+2t-1)^{(r-1)}(x^{2}-(2tr-2t+tr+1)x+2t^{2}%
r(r-1)),
\]
with roots
\[
tr-t+1 \text{ \ , \ } \frac{3tr-2t+1\pm\sqrt{t^{2}(r-2)^{2}+2t(3r-2)+1}}{2}
\text{ \ and \ } tr-2t+1.
\]
Since
\[
\frac{3tr-2t+1+\sqrt{t^{2}(r-2)^{2}+2t(3r-2)+1}}{2}>tr-t+1 > tr-2t+1,
\]
we see that
\[
q_{1}(G)=\frac{3tr-2t+1+\sqrt{t^{2}(r-2)^{2}+2t(3r-2)+1}}{2}.
\]

Also, if $t>2+\frac{1}{r-1},$ one can show that
\[
tr-t+1<\frac{3tr-2t+1-\sqrt{t^{2}(r-2)^{2}+2t(3r-2)+1}}{2},
\]
and so,
\[
q_{2}(G)=\frac{3tr-2t+1-\sqrt{t^{2}(r-2)^{2}+2t(3r-2)+1}}{2}>\delta(G).
\]

Finally, if $t\leq2+\frac{1}{r-1}$, we find that $q_{2}(G)=tr-t+1=\delta(G)$ ;
in that case, when $r=2$, the only feasible graphs are $K_{1,2,2}$ and
$K_{1,3,3}$; and when $r\geq3,$ the feasible graphs are of the type
$K_{1,2,\ldots,2},$ completing the proof.
\end{proof}

\bigskip


\begin{thebibliography}{99}                                                                                               %


\bibitem {AHL11}M. Aouchiche, P. Hansen and C. Lucas, On the extremal values
of the second largest $Q-$eigenvalue, \emph{Linear Algebra Appl.} \textbf{435}
(2011) 2591---2606.

\bibitem {B98}B. Bollob\'{a}s, \emph{Modern Graph Theory}, Springer-Verlag,
New York (1998), xiv+394 pp.

\bibitem {C10}D. Cvetkovi\'{c}, Spectral theory of graphs based on the
signless Laplacian, Research Report, (2010), available at:
$\emph{http://www.mi.sanu.ac.rs/projects/signless\_L\_reportApr11.pdf.}$

\bibitem {CRS07}D. Cvetkovic, P. Rowlinson and S. Simic, Eigenvalue bounds for
the signless Laplacian, \emph{Publ. Inst. Math. Beograd} \textbf{81(95)}
(2007), 11--27.

\bibitem {CRS07b}D. Cvetkovi\'{c}, P. Rowlinson, S. Simic, Signless Laplacians
of finite graphs, \emph{Linear Algebra Appl.} \textbf{423} (2007), 155---171.

\bibitem {CDS80}D. Cvetkovi\'{c}, M. Doob and H. Sachs, Spectra of Graphs-
Theory and Application (Deutscher Verlag der Wissenschaften, Academic Press,
Berlin, New York, 1980.

\bibitem {Das10}K. C. Das, On conjectures involving second Largest signless
Laplacian eigenvalue of graphs, \emph{Linear Algebra Appl.} \textbf{432}
(2010), 3018--3029.

\bibitem {Das11}K. C. Das, Proof of conjecture involving the second largest
signless Laplacian eigenvalue and the index of graphs, \emph{Linear Algebra
Appl.} \textbf{435} (2011), 2420--2424.

\bibitem {DR94}M. Desai and V. Rao, A characterization of the smallest
eigenvalue of a graph, \emph{J. Graph Theory} \textbf{18} (1994), 181--194.

\bibitem {FAVJ10}M.A.A. de Freitas, N.M.M. de Abreu, R.R. del-Vechio and S.
Jurkiewicz, Infinite families of $Q-$integral graphs, \emph{Linear Algebra
Appl.} \textbf{432} (2010), 2352---2360.

\bibitem {HoJo85}R. Horn and C. Johnson, \emph{Matrix Analysis,} Cambridge
University Press, Cambridge, 1985, xiii+561 pp.

\bibitem {IIM87}Y. Ikebe, T. Inagaki, and S. Miyamoto, The monotonicity
theorem, Cauchy's interlace theorem, and the Courant-Fischer theorem,
\emph{Amer. Math. Monthly}, \textbf{94} (1987), 352-354.

\bibitem {So94}W. So, Commutativity and spectra of Hermitian matrices,
\emph{Linear Algebra Appl.} \textbf{212--213} (1994), 121--129.

\bibitem {Smi70}J.H. Smith, Some properties of the spectrum of a graph, in:
\emph{Combinatorial Structures and their Applications}, Proc. Conf. Calgary
1969 (R. Guy et al., eds.), Gordon and Breach, New York, 1970, pp. 403--406.

\bibitem {WBHB10}J. Wang, F. Belardo, Q. Huang, and B. Borovicanin, On the two
largest $Q-$eigenvalues of graphs, \emph{Discrete Math.} \textbf{310} (2010), 2858--2866.

\bibitem {Ya02}C. Yan, Properties of spectra of graphs and line graphs,
\emph{Appl. Math. J. Chinese Univ. Ser. B} \textbf{3} (2002), 371--376.
\end{thebibliography}
\end{document}